\documentclass[12pt]{amsart}
\usepackage{amsmath}
\usepackage{amsfonts}
\usepackage{amssymb}
\usepackage{amsthm}
\usepackage{url}
\usepackage{hyperref}
\usepackage{amsrefs}
\usepackage{indentfirst}
\setlength{\parindent}{2em}
\newtheorem{theorem}{Theorem}[section]

\newtheorem{con}[theorem]{Conjecture}

\newtheorem{cor}[theorem]{Corollary}
\theoremstyle{remark}

\numberwithin{equation}{section}

\bibliographystyle{amsplain}
\begin{document}
\title{Recurrence relations of Li coefficients}
\author{Huan Xiao}
\address{Department of Mathematics, Faculty of Science, Kumamoto University,
Kurokami 2-39-1 Kumamoto 860-8555 Japan}
\curraddr{}
\email{197d9003@st.kumamoto-u.ac.jp}
\thanks{}
\subjclass[2010]{}

\keywords{Riemann hypothesis; Li coefficients}

\begin{abstract}
One of equivalents of the Riemann hypothesis is Li's criterion that all Li coefficients are positive. We study recurrence relations of Li coefficients in this note.
\end{abstract}

\maketitle

\section{Introduction}
The Riemann zeta function is the infinite series 
\begin{equation*}
\zeta(s)=\sum\limits_{n=1}^{\infty}\frac{1}{n^{s}}
\end{equation*}
where $ s:=\sigma+it $ is a complex number. It is well known that the Riemann zeta function has zeros at negative even integers which are called trivial zeros. The Riemann hypothesis asserts that all nontrivial zeros satisfy $ \sigma=\frac{1}{2} $. The Riemann zeta function is a vast subject in number theory, for its basic properties and other advanced progress one may refer to \cites{I,E,KV,T}. \par

Keiper \cite{Ke} studied the power series
\begin{center}
$ \log\left(2\xi\left(\dfrac{1}{s}\right)\right)=\sum\limits_{k=0}^{\infty}\lambda_{k}(1-s)^{k} $
\end{center}
where
\begin{center}
$ \xi(s)=\dfrac{s(s-1)}{2}\pi^{-\frac{s}{2}}\Gamma(\frac{s}{2})\zeta(s) $
\end{center}
is the Riemann Xi function and proved that the Riemann hypothesis implies $ \lambda_{k}>0 $ for all $ k>0 $. Later in 1997 Li \cite{Li} showed the converse, which is known as Li's criterion.
\begin{theorem}[Li's criterion]
The Riemann hypothesis is equivalent to the statement that all Li coefficients
\begin{center}
$ \lambda_{n}=\sum_{\rho}\left[1-\left(1-\dfrac{1}{\rho}\right)^{n}\right] $
\end{center}
are positive for integers $ n>0 $ where $ \rho $ runs over the nontrivial zeros of the Riemann zeta function.
\end{theorem}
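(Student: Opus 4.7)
My plan divides into three steps: derive the explicit formula for $\lambda_n$, handle the easy forward direction, then tackle the reverse implication via a generating-function argument.

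First I would combine Keiper's expansion with the Hadamard factorisation $2\xi(s) = \prod_\rho (1-s/\rho)$, with the product suitably paired ($\rho \leftrightarrow 1-\rho$) for convergence. Substituting $s = 1/(1-z)$ and using the elementary identity
\begin{equation*}
1 - \frac{1}{\rho(1-z)} = \frac{a_\rho - z}{1-z}, \qquad a_\rho := 1 - \frac{1}{\rho},
\end{equation*}
the logarithm of the Hadamard product becomes $\sum_\rho \log((a_\rho - z)/(1-z))$. Expanding each summand as a Taylor series in $z$ and using the symmetry $\rho \leftrightarrow 1-\rho$ (which interchanges $a_\rho$ with $a_\rho^{-1}$), I would collect coefficients of $z^n$ to arrive at the stated formula for $\lambda_n$.

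For the forward direction (RH implies $\lambda_n > 0$), the key observation is the equivalence $|a_\rho|=1 \Leftrightarrow \Re\rho = 1/2$, a one-line computation from $|a_\rho| = |\rho-1|/|\rho|$. Under RH, writing $a_\rho = e^{i\theta_\rho}$ and pairing $\rho$ with $\bar\rho$,
\begin{equation*}
\lambda_n = \sum_{\Im\rho > 0} 2\,(1 - \cos(n\theta_\rho)),
\end{equation*}
a non-negative series, strictly positive because not all $n\theta_\rho$ can be integer multiples of $2\pi$.

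The reverse implication is the main obstacle, and I would attack it via Pringsheim's theorem on power series with non-negative coefficients. Introduce the generating function $F(z) := \sum_{n\geq 1}\lambda_n z^n$; summing the geometric series in $n$ and using $1-a_\rho = 1/\rho$ gives the closed form
\begin{equation*}
F(z) = \frac{z}{1-z}\sum_\rho \frac{1}{\rho\,(1-a_\rho z)},
\end{equation*}
whose singularities in $|z|<1$ are the pole at $z=1$ together with poles at $z=1/a_\rho$ whenever $|a_\rho|>1$. Since no nontrivial zero of $\zeta$ is real, none of the points $1/a_\rho$ lies on the positive real axis. Assuming $\lambda_n > 0$ for all $n$, Pringsheim's theorem forces a singularity of $F$ at the radius of convergence $R$ on the positive real axis, leaving $z=1$ as the only candidate, hence $R \geq 1$ and $|a_\rho|\leq 1$ for every $\rho$. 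The functional equation $\rho \leftrightarrow 1-\rho$ then upgrades this to $|a_\rho| = 1$, giving RH. The delicate step here is justifying the paired convergence of the closed form for $F$ and verifying that the meromorphic extension has no additional real-positive singularities inside the unit disk; this is where the main work of the proof lies.
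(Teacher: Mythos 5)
The paper does not actually prove this theorem: it is quoted as a known result, with the forward direction attributed to Keiper \cite{Ke} and the converse to Li \cite{Li}, so there is no in-paper proof to compare against. Judged on its own terms, your outline is essentially sound. The forward direction is the classical argument: $\lvert 1-1/\rho\rvert=1$ iff $\Re\rho=1/2$, and pairing $\rho$ with $\bar\rho=1-\rho$ gives $\lambda_n=\sum_{\Im\rho>0}2(1-\cos(n\theta_\rho))\ge 0$, with strict positivity because infinitely many distinct $a_\rho$ cannot all be $n$-th roots of unity. Your converse via Pringsheim is a legitimate alternative to Li's original argument (which works with $\varphi(z)=\xi(1/(1-z))$ and a lemma characterizing zero-free discs) and to the Bombieri--Lagarias Poisson-kernel argument; the closed form $F(z)=\frac{z}{1-z}\sum_\rho\frac{1}{\rho(1-a_\rho z)}$ is correct, its only singularities on $(0,1)$ are absent because no nontrivial zero is real, and a zero off the line forces a genuine pole at $1/a_{\rho_0}=a_{1-\rho_0}$ of modulus $<1$, so $R<1$ and Pringsheim yields the contradiction. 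The points you flag as delicate are indeed where the work lies, and they are all fillable: the interchange of $\sum_n$ and $\sum_\rho$ is absolutely convergent for small $\lvert z\rvert$ since $\lvert a_\rho\rvert$ is uniformly close to $1$; the paired term $\frac{1}{\rho(1-a_\rho z)}+\frac{1}{(1-\rho)(1-z/a_\rho)}=-\frac{(1-a_\rho)^2(1+z)}{(1-a_\rho z)(a_\rho-z)}$ is $O(\lvert\rho\rvert^{-2})$ locally uniformly, giving meromorphy off $\{1\}$; and one must check the residues at $1/a_\rho$ do not cancel (they do not, since $\rho\mapsto a_\rho$ is injective). Two small corrections: $z=1$ lies on the boundary, not in $\lvert z\rvert<1$; and your step one, carried out from Keiper's series as the paper defines it, produces $\frac{1}{n}\sum_\rho[1-(1-1/\rho)^n]$, which differs from the theorem's $\lambda_n$ by the harmless factor $n$ (a normalization discrepancy already present in the paper itself).
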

The Li coefficients $ \lambda_{n} $ are extensively studied, see for example \cites{A,B,J,M}.\par 
In this note, we study recurrence relations of Li coefficients.
\subsection{Acknowledgement}
The author is partially supported by China Scholarship Council.

\section{Naive recurrence relation}
First we have the following naive recurrence relation of Li coefficients.
\begin{theorem}\label{thm1}
For integers $ n>1 $,
\begin{equation}
\lambda_{n}=\lambda_{n-1}+\sum_{\rho}\dfrac{1}{\rho}\left(1-\dfrac{1}{\rho}\right)^{n-1}.
\end{equation}
\end{theorem}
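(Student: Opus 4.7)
The plan is to work directly from the defining formula for $\lambda_{n}$ in Li's criterion and subtract. Writing
\begin{equation*}
\lambda_{n}-\lambda_{n-1}=\sum_{\rho}\left[\left(1-\frac{1}{\rho}\right)^{n-1}-\left(1-\frac{1}{\rho}\right)^{n}\right],
\end{equation*}
I would factor $(1-1/\rho)^{n-1}$ out of each summand so that the bracket collapses to $1-(1-1/\rho)=1/\rho$, which immediately yields the claimed identity. The whole computation is essentially a one-line algebraic manipulation.

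The only genuine subtlety, and what I would spend the writeup justifying, is that the two series $\sum_{\rho}[1-(1-1/\rho)^{n}]$ and $\sum_{\rho}[1-(1-1/\rho)^{n-1}]$ are only conditionally convergent (they must be summed with zeros paired as $\rho$ and $1-\rho$ in order of increasing $|\operatorname{Im}\rho|$). So strictly speaking I cannot split off $\sum_{\rho}1$ term by term. The right way is to form the partial sums over a symmetric finite set $S_{T}=\{\rho:\abs{\operatorname{Im}\rho}\le T\}$, write
\begin{equation*}
\sum_{\rho\in S_{T}}\left[1-\left(1-\tfrac{1}{\rho}\right)^{n}\right]-\sum_{\rho\in S_{T}}\left[1-\left(1-\tfrac{1}{\rho}\right)^{n-1}\right]=\sum_{\rho\in S_{T}}\frac{1}{\rho}\left(1-\frac{1}{\rho}\right)^{n-1},
\end{equation*}
and then let $T\to\infty$. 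The left-hand side tends to $\lambda_{n}-\lambda_{n-1}$ by Li's criterion (which already contains the correct convergence convention), and the right-hand side converges because under the pairing $\rho\leftrightarrow 1-\rho$ the term $1/\rho+1/(1-\rho)=1/(\rho(1-\rho))$ is of size $O(|\rho|^{-2})$, while the factor $(1-1/\rho)^{n-1}$ is bounded uniformly for $|\rho|$ large.

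The main (and only) obstacle is therefore this convergence bookkeeping; once that is in place, the identity follows by inspection. I would present the argument as a short display computation, preceded by one sentence fixing the ordering convention for $\sum_{\rho}$ so that every series in sight is meaningful.
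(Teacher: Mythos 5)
Your proof is correct and is essentially the paper's argument: the paper applies the factorization $1-t^{n}=(1-t)(1+t+\cdots+t^{n-1})$ with $t=1-1/\rho$ to the defining series and subtracts, which telescopes to exactly your one-line computation $t^{n-1}-t^{n}=t^{n-1}(1-t)=t^{n-1}/\rho$. Your additional care with the conditional convergence of $\sum_{\rho}$ (symmetric partial sums and the pairing $\rho\leftrightarrow 1-\rho$) is a point the paper passes over in silence, but it does not change the route of the proof.
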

\begin{proof}
This follows from
\begin{equation}\label{eq0}
\lambda_{n}=\sum_{\rho}\left[1-\left(1-\dfrac{1}{\rho}\right)^{n}\right],
\end{equation}
and
\begin{equation}
1-\left(1-\dfrac{1}{\rho}\right)^{n}=\dfrac{1}{\rho}\left[1+\left(1-\dfrac{1}{\rho}\right)+\left(1-\dfrac{1}{\rho}\right)^{2}+\cdots +\left(1-\dfrac{1}{\rho}\right)^{n-1}\right].
\end{equation}
\end{proof}
So if one could show that
\begin{equation}
\sum_{\rho}\dfrac{1}{\rho}\left(1-\dfrac{1}{\rho}\right)^{n-1}>0
\end{equation}
for all $ n\geq 1 $, then the Riemann hypothesis is true. 

\section{Nontrivial recurrence relation}
Since $ \rho $ and $ 1-\rho $ is a pair of zeros, we have
\begin{equation}
2\lambda_{n}=\sum_{\rho}\left[1-\left(1-\dfrac{1}{\rho}\right)^{n}\right]+\sum_{\rho}\left[1-\left(1-\dfrac{1}{1-\rho}\right)^{n}\right],
\end{equation}
set 
\begin{equation}
t:=1-\dfrac{1}{\rho}=\dfrac{\rho-1}{\rho},
\end{equation}
then
\begin{equation}
t^{-1}=1-\dfrac{1}{1-\rho}=\dfrac{\rho}{\rho-1},
\end{equation}
therefore
\begin{equation}\label{eq2}
2\lambda_{n}=\sum_{\rho}\left[2-\left(t^{n}+t^{-n}\right)\right],
\end{equation}
\begin{equation}\label{eq3}
\lambda_{n}=\sum_{\rho}\left[1-\left(\dfrac{t^{n}+t^{-n}}{2}\right)\right].
\end{equation}
From (\ref{eq2}) we have
\begin{theorem}
For $ n\geq 1 $,
\begin{equation}
\lambda_{n}=-\dfrac{1}{2}\sum_{t}\left(t^{n/2}-t^{-n/2}\right)^{2}.
\end{equation}
\end{theorem}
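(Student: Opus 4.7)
The plan is essentially to expand the square and compare with equation (\ref{eq2}). First I would compute
\begin{equation*}
\left(t^{n/2}-t^{-n/2}\right)^{2}=t^{n}-2+t^{-n},
\end{equation*}
which is an algebraic identity valid for any nonzero $t$ (and, importantly, independent of the choice of branch of $t^{n/2}$, since only the square appears).

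Next, I would substitute this into the right-hand side of the claimed identity to obtain
\begin{equation*}
-\dfrac{1}{2}\sum_{t}\left(t^{n/2}-t^{-n/2}\right)^{2}=\dfrac{1}{2}\sum_{t}\left[2-\left(t^{n}+t^{-n}\right)\right].
\end{equation*}
Since $t=1-1/\rho$ is in bijective correspondence with the nontrivial zeros $\rho$, the sum $\sum_{t}$ is nothing but $\sum_{\rho}$, so by equation (\ref{eq2}) the right-hand side equals $\tfrac{1}{2}\cdot 2\lambda_{n}=\lambda_{n}$, as required.

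There is no real obstacle here beyond bookkeeping: the proof is a direct algebraic rearrangement of (\ref{eq2}), relying only on the $\rho \leftrightarrow 1-\rho$ symmetry of the nontrivial zeros that was already exploited to derive (\ref{eq2}). The one point worth a brief remark is the convergence of $\sum_{t}(t^{n}+t^{-n}-2)$, which is inherited term-by-term from the convergence of (\ref{eq2}), so the manipulation is justified without any further analytic input.
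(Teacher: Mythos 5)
Your proof is correct and is essentially the argument the paper intends: the theorem is stated as following directly ``from (\ref{eq2})'', and your expansion $\left(t^{n/2}-t^{-n/2}\right)^{2}=t^{n}-2+t^{-n}$ together with the identification of $\sum_{t}$ with $\sum_{\rho}$ is exactly that rearrangement. Your added remarks on branch-independence and term-by-term convergence are sensible bookkeeping that the paper leaves implicit.
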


We now relate  $ \lambda_{n} $ to the Chebyshev polynomials of the first kind $ T_{n}(x) $. Recall that $ T_{n}(x) $ is defined as
\begin{align*}
T_{0}(x)=1\\
T_{1}(x)=x\\
T_{n+1}(x)=2xT_{n}(x)-T_{n-1}(x).
\end{align*}
$ T_{n}(x) $ has the property that for $ x\neq 0 $,
\begin{equation}\label{eq5}
T_{n}\left(\dfrac{x+x^{-1}}{2}\right)=\dfrac{x^{n}+x^{-n}}{2},
\end{equation}
for more about Chebyshev polynomials see \cite{Ri}.\par
Now we take 
\begin{equation}
x=\dfrac{t+t^{-1}}{2}=1-\dfrac{1}{2\rho(1-\rho)},
\end{equation}
by (\ref{eq3}) and (\ref{eq5}) we have
\begin{equation}
\lambda_{n}=\sum_{\rho}\left[1-T_{n}\left(x\right)\right], 
\end{equation}
from the recurrence relation of $ T_{n}(x) $ we have
\begin{eqnarray*}
\lambda_{n}&=&\sum_{\rho}\left[1-\left(2xT_{n-1}(x)-T_{n-2}(x)\right)\right]\\
&=&\sum_{\rho}\left[1-\left(\left(2-\dfrac{1}{\rho(1-\rho)}\right)(T_{n-1}(x)-1)+\left(2-\dfrac{1}{\rho(1-\rho)}\right)-T_{n-2}(x)\right)\right]\\
&=& 2\lambda_{n-1}-\lambda_{n-2}+\sum_{\rho}\dfrac{1}{\rho(1-\rho)} T_{n-1}(x)\\
&=&-\dfrac{1}{2}\sum_{\rho}\left[\dfrac{1}{\rho^{2}}\left(1-\dfrac{1}{\rho}\right)^{n-2}+\dfrac{1}{(1-\rho)^{2}}\left(1-\dfrac{1}{1-\rho}\right)^{n-2}\right]+2\lambda_{n-1}-\lambda_{n-2}\\
&=&-\sum_{\rho}\dfrac{1}{\rho^{2}}\left(1-\dfrac{1}{\rho}\right)^{n-2}+2\lambda_{n-1}-\lambda_{n-2}.
\end{eqnarray*}

Thus we have the following recurrence relation of Li coefficients.
\begin{theorem}\label{thm2}
Let $ n\geq 3 $, one has
\begin{equation}
\lambda_{n}=-\sum_{\rho}\dfrac{1}{\rho^{2}}\left(1-\dfrac{1}{\rho}\right)^{n-2}+2\lambda_{n-1}-\lambda_{n-2}.
\end{equation}
\end{theorem}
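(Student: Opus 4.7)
The plan is to start from the representation $\lambda_n = \sum_\rho [1 - T_n(x)]$ with $x = 1 - \frac{1}{2\rho(1-\rho)}$ established just above the statement, and feed in the three-term Chebyshev recurrence $T_n(x) = 2xT_{n-1}(x) - T_{n-2}(x)$. Forming the combination $\lambda_n - 2\lambda_{n-1} + \lambda_{n-2}$, the constant $1$'s and the $T_{n-1}, T_{n-2}$ contributions should telescope, leaving only the residual
\begin{equation*}
\lambda_n - 2\lambda_{n-1} + \lambda_{n-2} = \sum_\rho (2 - 2x)\, T_{n-1}(x).
\end{equation*}

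The next step is to observe that $2 - 2x = \frac{1}{\rho(1-\rho)}$, so the residual equals $\sum_\rho \frac{1}{\rho(1-\rho)} T_{n-1}(x)$. I would then expand $T_{n-1}(x) = \frac{t^{n-1} + t^{-(n-1)}}{2}$ using identity (\ref{eq5}) with $t = 1 - 1/\rho$, and use the two algebraic rewrites
\begin{equation*}
\frac{1}{\rho(1-\rho)} = -\frac{t^{-1}}{\rho^2} = -\frac{t}{(1-\rho)^2}
\end{equation*}
to absorb the two halves of $T_{n-1}$ respectively. After cancellation of a power of $t$, each half matches one of $\frac{1}{\rho^2}(1-1/\rho)^{n-2}$ or the symmetric companion $\frac{1}{(1-\rho)^2}(1-1/(1-\rho))^{n-2}$.

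The final move is symmetrization: because the nontrivial zeros are invariant under $\rho \mapsto 1-\rho$, the sum of the second piece over all $\rho$ coincides with the sum of the first, so the two halves combine into $-\sum_\rho \frac{1}{\rho^2}(1-1/\rho)^{n-2}$, and rearranging produces the claimed recurrence.

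The main obstacle I anticipate is bookkeeping rather than insight: each step is a one-line algebraic identity, but the sum $\sum_\rho$ is only conditionally convergent and must be interpreted in the symmetric sense used throughout Li's theory. Every rearrangement, in particular the $\rho \leftrightarrow 1-\rho$ swap in the last step, needs to respect that convention, as does the implicit regrouping of the Chebyshev identity across the indices $n, n-1, n-2$. Once this is granted, the argument is essentially a single application of the Chebyshev three-term recurrence followed by one symmetrization.
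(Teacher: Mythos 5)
Your proposal is correct and follows essentially the same route as the paper: both start from $\lambda_n=\sum_\rho[1-T_n(x)]$, apply the three-term Chebyshev recurrence to isolate the residual $\sum_\rho\frac{1}{\rho(1-\rho)}T_{n-1}(x)$, and then split $T_{n-1}$ via $t^{n-1}+t^{-(n-1)}$ and symmetrize over $\rho\leftrightarrow 1-\rho$. Your telescoping of $\lambda_n-2\lambda_{n-1}+\lambda_{n-2}$ is just a cleaner arrangement of the paper's in-line regrouping, and your remark about the symmetric summation convention is a point the paper leaves implicit.
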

The theorem above can be directly deduced by Theorem \ref{thm1}, but the relation between Li coefficients $ \lambda_{n} $ and Chebyshev polynomials is interesting.
\begin{cor}
Assume the Riemann hypothesis, for $ n\geq 3 $ we have
\begin{equation}
\left|\lambda_{n}+\lambda_{n-2}-2\lambda_{n-1}\right|\leq 2+\gamma-\log 4\pi=0.04619\cdots
\end{equation}
where $ \gamma=0.5772\cdots $ is the Euler constant.
\end{cor}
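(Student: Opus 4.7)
The plan is to combine Theorem~\ref{thm2} with the Riemann hypothesis to reduce the corollary to an evaluation of $\sum_\rho 1/|\rho|^2$.

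First I would rewrite the expression inside the absolute value using Theorem~\ref{thm2}, which gives the clean identity
\begin{equation*}
\lambda_n + \lambda_{n-2} - 2\lambda_{n-1} = -\sum_\rho \frac{1}{\rho^{2}}\left(1 - \frac{1}{\rho}\right)^{n-2}.
\end{equation*}
Next, taking absolute values and applying the triangle inequality term-by-term (the sum is absolutely convergent since the summands are $O(1/|\rho|^{2})$ under RH), I would obtain
\begin{equation*}
|\lambda_n + \lambda_{n-2} - 2\lambda_{n-1}| \leq \sum_\rho \frac{\left|1 - 1/\rho\right|^{n-2}}{|\rho|^{2}}.
\end{equation*}

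The Riemann hypothesis enters through the crucial observation that if $\rho = \tfrac{1}{2}+i\gamma_\rho$, then $1-\rho = \bar\rho$, so $|1-\rho| = |\rho|$ and therefore
\begin{equation*}
\left|1 - \frac{1}{\rho}\right| = \frac{|\rho-1|}{|\rho|} = 1.
\end{equation*}
This collapses the bound, independently of $n$, to $\sum_\rho 1/|\rho|^{2}$.

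Finally, I would evaluate this sum. Under RH, $\rho(1-\rho) = \rho\bar\rho = |\rho|^{2}$, so the partial-fraction identity $\frac{1}{\rho(1-\rho)} = \frac{1}{\rho} + \frac{1}{1-\rho}$ combined with the $\rho \mapsto 1-\rho$ symmetry of the zero set yields
\begin{equation*}
\sum_\rho \frac{1}{|\rho|^{2}} = \sum_\rho \frac{1}{\rho(1-\rho)} = 2\sum_\rho \frac{1}{\rho} = 2\lambda_1.
\end{equation*}
The well-known closed form $\lambda_1 = 1 + \tfrac{\gamma}{2} - \tfrac{1}{2}\log 4\pi$ (coming from the Hadamard factorization of $\xi$) then gives $2\lambda_1 = 2 + \gamma - \log 4\pi$, completing the estimate.

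The only subtlety is justifying the termwise triangle inequality in the second step, since the original sum defining $\lambda_n$ is only conditionally convergent and must be taken in symmetric pairs $\rho, \bar\rho$; however, the factor $1/\rho^{2}$ decays like $1/t^{2}$, so the rearrangement is harmless. Otherwise the argument is a direct two-line reduction once Theorem~\ref{thm2} and the RH identity $|1-1/\rho|=1$ are in hand.
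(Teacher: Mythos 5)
Your proposal is correct and follows essentially the same route as the paper: apply Theorem~\ref{thm2}, use the triangle inequality, note that RH forces $\left|1-\tfrac{1}{\rho}\right|=1$, and identify the resulting bound with $\sum_{\rho}\tfrac{1}{\rho(1-\rho)}=2+\gamma-\log 4\pi$. The only cosmetic difference is that you derive this constant as $2\lambda_{1}$ from the Hadamard factorization, whereas the paper simply cites Edwards for the value of the sum.
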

\begin{proof}
By Theorem \ref{thm2},
\begin{equation}\label{eq00}
\left|\lambda_{n}+\lambda_{n-2}-2\lambda_{n-1}\right|=\left|\sum_{\rho}\dfrac{1}{\rho^{2}}\left(1-\dfrac{1}{\rho}\right)^{n-2}\right|,
\end{equation}
since we have assumed Riemann hypothesis, so
\begin{equation}
\left|\left(1-\dfrac{1}{\rho}\right)\right|=1,\quad \sum_{\rho}\dfrac{1}{\left|\rho\right|^{2}}=\sum_{\rho}\dfrac{1}{\rho(1-\rho)}=2+\gamma-\log 4\pi,
\end{equation}
the value of $ \sum_{\rho}\dfrac{1}{\rho(1-\rho)} $ can be found in for example \cite{E}. From (\ref{eq00}) we have
\begin{equation}
\left|\lambda_{n}+\lambda_{n-2}-2\lambda_{n-1}\right|\leq\sum_{\rho}\left|\dfrac{1}{\rho^{2}}\left(1-\dfrac{1}{\rho}\right)^{n-2}\right|=\quad \sum_{\rho}\dfrac{1}{\left|\rho\right|^{2}}=2+\gamma-\log 4\pi.
\end{equation}
\end{proof}
So if one could find an $ n\geq 3 $ such that
\begin{equation*}
\left|\lambda_{n}+\lambda_{n-2}-2\lambda_{n-1}\right|> 2+\gamma-\log 4\pi,
\end{equation*}
then the Riemann hypothesis is false.\par

In view of Theorem \ref{thm2} we pose the following conjecture.
\begin{con}
For all $ n\geq 3 $, one has
\begin{equation}
\lambda_{n}>2\lambda_{n-1}-\lambda_{n-2}.
\end{equation}
\end{con}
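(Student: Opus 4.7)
The plan is to use Theorem~\ref{thm2} to convert the conjecture into a sign statement for a single sum over zeros, and then to try to establish that sign. Subtracting the recurrence from $\lambda_n$ gives
\begin{equation*}
\lambda_n+\lambda_{n-2}-2\lambda_{n-1}=-\sum_\rho\frac{1}{\rho^2}\left(1-\frac{1}{\rho}\right)^{n-2},
\end{equation*}
so the conjecture is equivalent to
\begin{equation*}
\sum_\rho\frac{1}{\rho^2}\left(1-\frac{1}{\rho}\right)^{n-2}<0\qquad\text{for all }n\geq 3.\qquad(\ast)
\end{equation*}

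Next I would symmetrize using the pairing $\rho\leftrightarrow 1-\rho$, as in Section~3. Writing $t=1-1/\rho$ so that $t^{-1}=1-1/(1-\rho)$, the pair $\{\rho,1-\rho\}$ contributes to $(\ast)$
\begin{equation*}
\frac{t^{\,n-2}}{\rho^2}+\frac{t^{-(n-2)}}{(1-\rho)^2}=(-1)^n\left(\frac{(1-\rho)^{n-2}}{\rho^n}+\frac{\rho^{n-2}}{(1-\rho)^n}\right).
\end{equation*}
Under the Riemann hypothesis one has $1-\rho=\bar\rho$, and setting $\rho=\tfrac12+i\gamma=|\rho|e^{i\varphi}$ with $\varphi=\arctan 2\gamma$, this pair contribution simplifies to $2(-1)^n\cos((2n-2)\varphi)/|\rho|^2$. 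Thus under RH, $(\ast)$ becomes the real trigonometric condition
\begin{equation*}
(-1)^{n+1}\sum_{\gamma>0}\frac{\cos\bigl((2n-2)\varphi\bigr)}{\tfrac14+\gamma^2}>0\qquad(n\geq 3).
\end{equation*}

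To verify this, I would split the series into a low-lying piece $\gamma\leq T$ and a tail. On the tail, $\varphi\to\pi/2$, so $\cos((2n-2)\varphi)\to(-1)^{n-1}$ and the tail contribution has the correct sign $(-1)^{n+1}(-1)^{n-1}=+1$; the convergent sum $\sum_\rho 1/|\rho|^2=2+\gamma-\log 4\pi$ makes this quantitative. The main obstacle is the low-lying piece, where $\cos((2n-2)\arctan 2\gamma)$ oscillates in $n$ and may carry either sign, with no evident reason why the finitely many contributions should cooperate with the tail uniformly in $n$. A realistic strategy is therefore to handle large $n$ via an equidistribution argument for $\{(n-1)\varphi/\pi\}$, showing that the oscillatory low-zero terms become negligible relative to the main contribution, and to dispose of the remaining bounded range of $n$ by numerical computation using the first several known zeros. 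Since $(\ast)$ combined with the easy checks $\lambda_1>0$ and $\lambda_2>\lambda_1$ would yield $\lambda_n>0$ for all $n\geq 1$ and hence, via Li's criterion, the Riemann hypothesis itself, no short unconditional proof is to be expected.
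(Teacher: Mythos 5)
The statement you are addressing is posed in the paper as a \emph{conjecture}: the paper offers no proof of it, and it cannot have an easy one, since (as you note) together with the numerical facts $\lambda_1>0$ and $\lambda_2>\lambda_1$ it would force the $\lambda_n$ to be increasing and hence positive, which by Li's criterion is the Riemann hypothesis. Your reduction of the conjecture to the sign condition $\sum_\rho \rho^{-2}(1-1/\rho)^{n-2}<0$ via Theorem~\ref{thm2} is correct and is exactly the point of view the paper itself takes when it states the conjecture ``in view of Theorem~\ref{thm2}''; the symmetrized pair contribution $2(-1)^n\cos((2n-2)\varphi)/|\rho|^2$ under RH is also computed correctly.

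However, what you have written is a research programme, not a proof, and it has at least three genuine gaps. First, the trigonometric reformulation assumes RH, so even if completed your argument would only prove ``RH implies the conjecture''; since the conjecture implies RH, this route is structurally incapable of establishing the statement unconditionally. Second, the tail estimate $\cos((2n-2)\varphi)\to(-1)^{n-1}$ requires $(2n-2)\left(\tfrac{\pi}{2}-\varphi\right)\approx (n-1)/\gamma$ to be small, so the cutoff $T$ must grow at least linearly in $n$; consequently, as $n\to\infty$ the tail $\sum_{\gamma>T}1/(\tfrac14+\gamma^2)$ --- the only part with guaranteed favourable sign --- tends to zero, while essentially all of the mass $2+\gamma-\log 4\pi$ sits in the oscillatory low-lying piece, which therefore cannot be treated as a negligible correction. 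Third, equidistribution of $(n-1)\varphi/\pi$ modulo $1$ controls the oscillatory sum on average over $n$, not for each individual $n$, whereas the conjecture demands the strict inequality for every $n\geq 3$; a numerical check of finitely many $n$ cannot close this, because the problematic range of $n$ is unbounded. You are candid that the argument does not close, and that honesty is appropriate: no proof of this statement exists in the paper or, to present knowledge, anywhere else.
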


\end{document}